 \let\mathscr\relax
\theoremstyle{definition}
\newtheorem{defin}{Definition}[section]
\theoremstyle{definition}
\newtheorem{ex}[defin]{Example}
\theoremstyle{plain}
\newtheorem{theo}[defin]{Theorem}
\theoremstyle{plain}
\newtheorem{prop}[defin]{Proposition}
\theoremstyle{plain}
\newtheorem{lem}[defin]{Lemma}
\theoremstyle{plain}
\newtheorem{cor}[defin]{Corollary}
\theoremstyle{definition}
\theoremstyle{definition}
\theoremstyle{definition}
\newtheorem{pb}[defin]{Problem}
\theoremstyle{plain}
\theoremstyle{definition}
\theoremstyle{definition}
\theoremstyle{definition}
\newtheorem*{defin*}{Definition}
\theoremstyle{definition}
\newtheorem*{ex*}{Example}
\theoremstyle{plain}
\newtheorem*{theo*}{Theorem}
\theoremstyle{plain}
\newtheorem*{prop*}{Proposition}
\theoremstyle{plain}
\newtheorem*{lem*}{Lemma}
\theoremstyle{plain}
\newtheorem*{cor*}{Corollary}
\theoremstyle{definition}
\newtheorem*{rmk*}{Remark}
\theoremstyle{definition}
\newtheorem*{exe*}{Exercise}
\theoremstyle{plain}
\newtheorem{conjA}{Conjecture}[section]
\theoremstyle{plain}
\newtheorem{theoA}[conjA]{Theorem}
\numberwithin{equation}{section}
\def\thm@space@setup{%
  \thm@preskip=\parskip \thm@postskip=0pt
}
\setlist[enumerate]{label=(\roman*)}
\def\irr{{\rm Irr}}
\def\ker{{\rm Ker}}
\def\syl{{\rm Syl}}
\def\SL{{\rm SL}}
\def\GL{{\rm GL}}
\def\n{{\mathbf{N}}}
\def\z{{\mathbf{Z}}}
\def\O{{\mathbf{O}}}
\def\N{{\mathcal{N}}} 
\def\Nm{{\mathcal{N}_m}} 
\def\F{{\mathbf{F}}} 
\newcommand{\uset}[3][0ex]{%
  \mathrel{\mathop{#3}\limits_{
    \vbox to#1{\kern-7\ex@
    \hbox{$\scriptstyle#2$}\vss}}}}
\newcommand{\wh}[1]{\widehat{#1}}
\def\blfootnote{\gdef\@thefnmark{}\@footnotetext}
\title{{\huge\bf Monomial characters of finite solvable groups}\\
\author{\Large Damiano Rossi}
\date{}
\blfootnote{\emph{$2010$ Mathematical Subject Classification:} $20$C$15$, $20$D$10$, $20$D$15$.
\\
\emph{Key words and phrases:} Monomial characters, non-vanishing elements, solvable groups.
\\
The content of this paper is part of the author's master thesis. This work is supported by the EPSRC grant EP/T$004592/1$. I would like to Silvio Dolfi for his guidance during this project and for introducing me to representation theory of finite groups. 
}}
\begin{document}

\renewcommand{\theconjA}{\Alph{conjA}}

\renewcommand{\thetheoA}{\Alph{theoA}}

\selectlanguage{english}

\maketitle

\begin{abstract}
We give new evidences to the fact that the structure of a solvable group can be controlled by irreducible monomial characters. In particular we inspect the role of monomial characters in Isaacs-Navarro-Wolf's conjecture and in Gluck's conjecture.
\end{abstract}

\section{Introduction}

Let $G$ be a finite group and denote by $\irr(G)$ the set of irreducible complex characters of $G$. As is well known the structure of $G$ is strongly influenced by properties of the set $\irr(G)$. In \cite{Lu-PanI}, \cite{Lu-PanII}, \cite{Che-LewI}, \cite{Che-LewII} and \cite{Che-Yan}, the authors showed that certain aspects of the structure of a solvable group $G$ can be controlled by only considering monomial characters. Following this idea, we propose refinements of two well-known open conjectures in character theory of finite groups introduced by Isaacs--Navarro--Wolf and Gluck respectively.

Recall that an element $g\in G$ is \emph{non-vanishing} if $\chi(g)\neq 0$ for every $\chi\in\irr(G)$. We denote by $\N(G)$ the set of non-vanishing elements of $G$. Let $\irr_m(G)$ be the set of irreducible monomial characters of $G$. We define an element $g\in G$ to be \emph{monomial-non-vanishing} if $\chi(g)\neq 0$ for every $\chi\in\irr_m(G)$ and denote by $\Nm(G)$ the set of such elements. Notice that $\N(G)\subseteq \Nm(G)$ while there are solvable groups for which the inclusion is strict (the smallest such group is $\SL_2(3)$). The Isaacs-Navarro-Wolf conjecture introduced in \cite{Isa-Nav-Wol} states that $\N(G)\subseteq\F(G)$ for every solvable group $G$. In this paper, we prove that $\Nm(G)\subseteq \F(G)$ in all the cases in which Isaacs--Navarro--Wolf's conjecture is known to hold. We conjecture that this fact holds for every solvable group.

\begin{conjA}
\label{conj:m-INW}
If $G$ is a solvable group, then $\Nm(G)\subseteq \F(G)$.
\end{conjA}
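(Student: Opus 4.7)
I would argue by induction on $|G|$, imitating the Isaacs--Navarro--Wolf strategy and replacing $\N(G)$ with $\Nm(G)$ wherever the argument allows. Let $g\in\Nm(G)$ and suppose, for contradiction, that $g\notin\F(G)$. The basic tool is the inflation observation: if $N\trianglelefteq G$ and $\psi\in\irr_m(G/N)$, then inflation produces $\wt\psi\in\irr_m(G)$ with $\wt\psi(g)=\psi(gN)$. Hence $gN\in\Nm(G/N)$, and by induction $gN\in\F(G/N)$. Whenever $\F(G/N)=\F(G)/N$ (for example if $N\le\Phi(G)$) this already forces $g\in\F(G)$, contradiction. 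So we may assume that every minimal normal subgroup $N$ of $G$ is complemented in $G$ and satisfies $\F(G/N)>\F(G)/N$.

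Next I would carry out the usual primitive reduction for solvable groups: a minimal counterexample has a unique minimal normal subgroup $N$, elementary abelian of order $p^n$, with $N=\F(G)=C_G(N)$, and $G/N$ embedded as an irreducible solvable subgroup of $\GL(N)$. Writing $g=nt$ with $n\in N$ and $t$ inducing a nontrivial element of $G/N$, the goal becomes: build $H\le G$ and $\lambda\in\lin(H)$ with $\chi:=\lambda^G\in\irr(G)$ and
\[
\chi(g)=\frac{1}{|H|}\sum_{\substack{x\in G\\ xgx^{-1}\in H}}\lambda(xgx^{-1})=0.
\]
Natural candidates for $H$ are preimages in $G$ of point stabilizers or Hall subgroups of $G/N$ containing the image $tN$; the character $\lambda$ should be chosen using an eigenspace decomposition of $N$ under $\langle t\rangle$, so that the sum of values of $\lambda$ over the $H$-conjugates of $g$ meeting $H$ can be forced to vanish by a character-orthogonality relation on $N$.

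The main obstacle is precisely this last step. The proofs of the Isaacs--Navarro--Wolf conjecture in the known cases produce an irreducible character vanishing at $g$, but there is no reason a priori for that character to be monomial. To remove this gap one has to exploit the finer structure of irreducible solvable linear groups acting on $N$ (via Huppert's and Suprunenko's classifications, or the Fitting--Huppert normal series of $G/N$) and construct the vanishing character explicitly as an induced linear character from a well-chosen subgroup. I would expect the conjecture to fall first in the strata where the Isaacs--Navarro--Wolf proof is itself constructive — groups with $G/\F(G)$ nilpotent, groups of odd order, and groups of small Fitting height — by going through the existing arguments and verifying that the vanishing character they produce can be taken monomial; the general case should then reduce to a genuinely new input about the distribution of vanishing zeros among $\irr_m(G)$ as opposed to $\irr(G)$.
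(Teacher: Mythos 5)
This statement is Conjecture A of the paper: it is posed as an open problem, and the paper proves it only in special cases (odd order, abelian Sylow $2$-subgroups of the Fitting factors $\F_{i+1}(G)/\F_i(G)$ for $i\leq 7$, $G/\F(G)$ supersolvable, and the case where $4\nmid\chi(1)$ for all $\chi\in\irr_m(G)$). Your proposal does not prove it either; you candidly identify the central obstacle yourself, namely that the vanishing character produced by the Isaacs--Navarro--Wolf machinery has no reason to be monomial. So what you have is a research plan with an acknowledged gap, not a proof, and no amount of routine verification along the lines you sketch is known to close it.

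Two more concrete points. First, your primitive reduction is not justified: knowing $gN\in\F(G/N)$ for every minimal normal subgroup $N$ does not let you intersect back to $g\in\F(G)$, because $\F(G/N)$ can strictly contain $\F(G)N/N$; the correct tool, which the paper uses in Propositions \ref{prop:m-INW conjecture for odd order groups} and \ref{prop:Nonvanishing nonFitting}, is the characterization of $\F(G)$ as the intersection of the centralizers of the non-Frattini chief factors. Second, the paper's route to monomiality in the cases it can handle is different from, and cleaner than, your plan of revisiting the INW proofs to check that their vanishing characters can be taken monomial. The key observation (Lemma \ref{lem:Fixed point}) is that for an abelian normal subgroup $N$ with $N\cap\Phi(G)=1$ and $\lambda\in\irr(N)$, the stabilizer $G_\lambda$ splits over $N$, so $\lambda$ extends to $G_\lambda$ and induces to a \emph{monomial} irreducible character of $G$; hence any $x\in\Nm(G)$ already fixes a point in each $G$-orbit on $\irr(N)$. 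This feeds directly into the orbit theorem \cite[Theorem 4.2]{Isa-Nav-Wol}, which forces the image of the $2'$-part of $x$ to be trivial, concentrating the entire remaining difficulty on $2$-elements. That is precisely why the known cases all carry hypotheses taming the Sylow $2$-structure, and why the general case of Conjecture \ref{conj:m-INW} (like the original conjecture) remains open.
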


In \cite{Glu}, Gluck proved that the index of the Fitting subgroup of an arbitrary finite group $G$ is bounded by a polynomial function of the largest character degree $b(G)$ of $G$. In particular it is conjectured that $|G:\F(G)|\leq b(G)^2$ for every solvable group $G$. We define $b_m(G):=\{\chi(1)\mid \chi\in\irr_m(G)\}$ and observe that $b_m(G)\leq b(G)$. Moreover $b(G)-b_m(G)$ can be arbitrarily large (see Example \ref{ex:Gluck}). In Section \ref{sec:Gluck}, we verify the stronger bound $|G:\F(G)|\leq b_m(G)^2$ in all the cases in which Gluck's conjecture is known to hold. As before, we conjecture that this fact holds for all solvable groups.

\begin{conjA}
\label{conj:m-Gluck}
If $G$ is a solvable group, then $|G:\F(G)|\leq b_m(G)^2$.
\end{conjA}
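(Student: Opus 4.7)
The plan is to mimic the standard approach to Gluck's conjecture while tracking monomiality at every stage. First, by induction on $|G|$ and the usual factoring arguments, reduce to the case in which $\F(G) = N$ is a self-centralizing, $G$-irreducible elementary abelian $p$-group, so that $G/N$ acts faithfully and irreducibly on $N$. The problem then becomes: find $\chi \in \irr_m(G)$ with $\chi(1)^2 \geq |G:N|$.

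Since $N$ is abelian, $\irr(N) = \lin(N)$ and $G/N$ permutes this dual module. Applying regular-orbit theorems for linear groups acting on their natural modules (in the style of Gluck, Wolf, Manz--Wolf, Keller), choose $\lambda \in \lin(N)$ whose inertia subgroup $T = I_G(\lambda)$ satisfies $|G:T|^2 \geq |G:N|$. If $\lambda$ extends to a linear character $\mu$ of $T$, then $\mu^G \in \irr(G)$ is monomial of degree $|G:T|$, and the bound is established. Otherwise, Clifford theory produces some $\psi \in \irr(T \mid \lambda)$ with $\psi(1) > 1$ and $\psi^G \in \irr(G)$ of degree $|G:T|\psi(1)$. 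To make $\psi^G$ monomial one needs $\psi$ itself induced from a linear character of some intermediate subgroup $N \leq H \leq T$; equivalently, one needs a monomial irreducible character of $T/\ker(\lambda)$ lying over the faithful linear character induced by $\lambda$. This one attacks by induction, combined with criteria guaranteeing that certain solvable groups are $M$-groups (for instance, Taketa-type results in odd order, or structural hypotheses that rule out the standard $\SL_2(3)$ obstruction).

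The main obstacle is exactly the gap between Gluck's conjecture and its monomial refinement: the large-degree irreducible character produced above need not be monomial, because solvable groups are not in general $M$-groups. The obstruction concentrates in sections of $T$ resembling $\SL_2(3)$, and more generally in sections whose Schur multiplier forces a non-monomial faithful irreducible character. A complete proof of Conjecture~\ref{conj:m-Gluck} would require a uniform device for avoiding such sections inside every inertia subgroup arising from the orbit-theorem step while still preserving the inequality $|G:T|\psi(1) \geq \sqrt{|G:N|}$. No such device is currently available, which is why one can at present only verify the conjecture in those settings (odd order, low Fitting height, restricted prime-divisor configurations, and the like) in which the relevant non-monomial subquotients cannot appear---these being precisely the cases in which Gluck's original conjecture itself is known.
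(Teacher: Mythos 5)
This statement is a conjecture, and the paper does not prove it in general either; it only verifies it in the cases where Gluck's original conjecture is known (Theorem~\ref{thm:Large orbits} combined with Proposition~\ref{prop:Monomial Gluck's conjecture}). Your proposal is honest about not closing the general case, but it misdiagnoses where the difficulty actually lies. After the reduction to $\Phi(G)=1$, the Fitting subgroup $F=\F(G)$ of a solvable group is abelian and complemented in $G$ (Gasch\"utz), and hence every $\lambda\in\irr(F)$ extends to a \emph{linear} character $\wh{\lambda}$ of its inertia group $T=G_\lambda$, so that $\wh{\lambda}^G$ is automatically monomial of degree $|G:T|$. The ``otherwise'' branch of your argument --- where Clifford theory hands you a nonlinear $\psi\in\irr(T\mid\lambda)$ and you must worry about whether $\psi^G$ is monomial, about $M$-group criteria, and about $\SL_2(3)$-like sections of $T$ --- simply never occurs in this reduction. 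There is no monomiality obstruction at all: the entire content of the monomial refinement is that the character produced by the standard argument was monomial to begin with.

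Consequently the genuine bottleneck is exactly the same as for Gluck's conjecture itself, namely the orbit-theorem step: one needs, for every finite faithful completely reducible module $V$ for a solvable group $H=G/F$, a vector $\lambda\in V$ with $|\mathbf{C}_H(\lambda)|\leq|H|^{1/2}$, and this is only known under the extra hypotheses listed in Theorem~\ref{thm:Large orbits} (odd order, abelian Sylow $2$-subgroups, $3'$-groups, primitivity restrictions, etc.); unconditionally one only has the exponent $2/3$, giving $|G:\F(G)|\leq b_m(G)^3$. So your final conclusion --- that the conjecture is currently verifiable precisely where Gluck's conjecture is known --- agrees with the paper, but for the wrong reason: it is not because non-monomial subquotients are excluded in those cases, but because those are the cases where the $|H|^{1/2}$ orbit bound is available. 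You should also note that your proposed reduction to $\F(G)$ being an irreducible $G$-module requires checking $b_m(G/N)\leq b_m(G)$ along the way (which does hold, since monomial characters inflate to monomial characters), whereas the paper avoids this by working directly with the completely reducible module $\irr(\F(G))$.
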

 
To conclude, we show that the characterization of the normality of a Sylow $p$-subgroup given in \cite{Mal-Nav} can be checked by considering only monomial characters in the case of solvable groups.

\begin{theoA}
\label{thm:m-Malle-Navarro}
Let $G$ be a solvable groups, $p$ a prime and consider $P\in\syl_p(G)$. Then $P\unlhd G$ if and only if $p$ does not divide $\chi(1)$, for every $\chi\in\irr_m(G\mid 1_P)$.
\end{theoA}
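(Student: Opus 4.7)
The forward direction is immediate from the Malle--Navarro theorem \cite{Mal-Nav}, since $\irr_m(G \mid 1_P) \subseteq \irr(G \mid 1_P)$. For the converse I would argue by induction on $|G|$ via a minimal counterexample: assume $G$ satisfies the monomial hypothesis but $P \not\unlhd G$, and derive a contradiction. First, the hypothesis descends to $G/N$ whenever $N \unlhd G$ satisfies $N \subseteq P$ or $N \cap P = 1$, because monomial characters inflate to monomial characters and the identity $[\chi_P, 1_P]_P = [\bar\chi_{PN/N}, 1_{PN/N}]_{PN/N}$ preserves the lying-over-trivial condition. Applied iteratively this gives $O_p(G) = 1$, and for every minimal normal subgroup $N$ of $G$ (necessarily of $p'$-order) induction on $G/N$ yields $PN \unlhd G$. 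If $G$ had two distinct such minimal normals $N_1, N_2$, then $PN_1 \cap PN_2 = P$ (using $N_1 \cap N_2 = N_i \cap P = 1$) would be normal in $G$; hence $G$ has a unique minimal normal subgroup $N$, elementary abelian of order prime to $p$, and $PN \unlhd G$. Finally $P$ cannot centralize $N$, for otherwise $PN = P \times N$ would make $P$ characteristic in $PN \unlhd G$.

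Since $P$ then acts non-trivially on $\irr(N)$, one chooses $\theta \in \irr(N) \setminus \{1_N\}$ with $P_\theta := P \cap G_\theta \subsetneq P$. The formula $\hat\theta(np) := \theta(n)$ for $n \in N$ and $p \in P_\theta$ defines a linear character $\hat\theta \in \lin(NP_\theta)$ extending $\theta$ and satisfying $\hat\theta|_{P_\theta} = 1_{P_\theta}$. The induced character $\sigma := \hat\theta^{PN}$ is an irreducible monomial character of $PN$ of degree $|P:P_\theta|$ (a $p$-power greater than $1$), and a Mackey computation using $P \cap (NP_\theta)^g = P \cap P_\theta^g$ shows $\sigma \in \irr_m(PN \mid 1_P)$.

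To lift $\sigma$ to a witness in $\irr_m(G \mid 1_P)$, set $S := G_\sigma \supseteq PN$; the quotient $S/PN$ is a $p'$-group. Applying Schur--Zassenhaus to $S/NP_\theta$, in which $PN/NP_\theta$ is the normal Sylow $p$-subgroup, produces a subgroup $H \leq S$ with $H \cap PN = NP_\theta$ and $HPN = S$. Granting a linear extension $\tilde\theta$ of $\hat\theta$ to $H$, Mackey with the single double coset $HPN = S$ gives $(\tilde\theta^S)|_{PN} = \hat\theta^{PN} = \sigma$, so $\tilde\theta^S \in \irr(S)$ is an extension of $\sigma$; by Clifford's theorem $\chi := \tilde\theta^G = (\tilde\theta^S)^G \in \irr(G)$, of degree $|G:H| = |G:S| \cdot |P:P_\theta|$ (divisible by $p$). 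The $g=1$ term of the Mackey decomposition of $\chi_P$ equals $(1_{P_\theta})^P$ (using $P \cap H = P_\theta$ and $\tilde\theta|_{P_\theta} = 1$), which contains $1_P$, so $\chi \in \irr_m(G \mid 1_P)$, contradicting the hypothesis. The main obstacle is the existence of the linear extension $\tilde\theta$: the cohomological obstruction lies in $H^2(H/NP_\theta, \c^\times)$, and the hard part is exploiting that $\hat\theta$ has values in $\mu_q$ (where $q$ is the prime dividing $|N|$) and that $H/NP_\theta \cong S/PN$ is a $p'$-group, together with the solvable structure of $G$ constrained by the preceding reductions, to force the obstruction to vanish---possibly at the cost of a more careful choice of $\theta$ among the non-$P$-invariant characters of $N$.
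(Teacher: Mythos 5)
Your reductions are sound as far as they go: the hypothesis does pass to quotients, so a minimal counterexample has $\O_p(G)=1$, a unique minimal normal subgroup $N$ (an abelian $p'$-group) with $PN\unlhd G$ and $\c_P(N)=1$, and the character $\sigma=\wh{\theta}^{PN}$ is indeed an irreducible monomial character of $PN$ over $1_P$ of degree $|P:P_\theta|>1$. The proof fails exactly where you flag it, and the gap is not a technicality. First, a local error: $NP_\theta$ is not normal in $S=G_\sigma$, so ``$S/NP_\theta$'' is not a group and Schur--Zassenhaus cannot be applied to it. (The subgroup $H$ you want does exist --- take $H=S_\theta$, which satisfies $H\cap PN=NP_\theta$ and $HPN=S$ by a Frattini-type argument on the constituents of $\sigma_N$ --- but not for the reason you give.) Second, and decisively: the existence of a \emph{linear} extension $\tilde\theta$ of $\wh{\theta}$ to $H$ is the entire remaining content, and you have no mechanism to produce it. It would not even suffice to extend $\sigma$ to $S$ as an abstract character (where coprimality of $\sigma(1)$ and $|S:PN|$ might help): the final character must be monomial, so the extension must itself be induced from a linear character, and nothing in your reductions controls the relevant $2$-cocycle. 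As written, the argument does not close.

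For comparison, the paper avoids this obstruction entirely by a different arrangement. It proves a stronger vanishing statement (Theorem \ref{thm:m-Malle-Navarro with kernels}) and, in the minimal counterexample there, applies the Frattini argument to the normal subgroup $QM$ to obtain a global splitting $G=M\rtimes \n_G(Q)$ over the minimal normal subgroup $M$. Consequently the full stabilizer $T=G_\lambda$ of a linear character $\lambda$ of $M$ splits as $M\rtimes(T\cap\n_G(Q))$, so $\lambda$ has a canonical linear extension to all of $T$ for free; this is then twisted by a character of $T/M$ to become trivial on the complement, and inducing to $G$ in one step lands in $\irr_m(G\mid 1_P)$ with no extension problem to solve. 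The other key difference is the choice of $\lambda$: rather than merely asking that $\theta$ be non-$P$-invariant, the paper uses the regular-orbit result \cite[Lemma 3.1]{Isa-Nav-Wol} to find $\lambda$ with trivial stabilizer in a suitable $p$-subgroup, which is what makes the induced character vanish on nontrivial $p$-elements and yields the stronger theorem. If you want to rescue your route, you should replace the cohomological step by an analogous splitting argument over $N$.
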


The above results might lead to the following conceptual question for solvable groups: either monomial characters play an important role in the representation theory of solvable groups or they possess a large number of such characters. To support the first statement, consider $G:=\SL_2(3)$ and define $G_n$ to be the direct product of $n$ copies of $G$. Noticing that
\[\dfrac{|\irr_m(G_n)|}{|\irr(G_n)|}=\left(\dfrac{|\irr_m(G)|}{|\irr(G)|}\right)^n\]
by \cite{Wal}, we deduce that
$$\lim\limits_{n\to\infty}\frac{|\irr_m(G_n)|}{|\irr(G_n)|}=0$$
since $G$ is a non-monomial group. This shows that there exist solvable groups with an arbitrarily small proportion of monomial characters.

\section{Isaacs-Navarro-Wolf's conjecture}
\label{sec:INW}

In \cite{Isa-Nav-Wol}, Isaacs--Navarro--Wolf's conjecture is proved for groups of odd order and, more generally, for solvable groups with abelian Sylow $2$-subgroup. In \cite{Mor-Wol} the authors show that $\mathcal{N}(G)\subseteq \F_{10}(G)$ for every solvable group and extend the previous result to every solvable group $G$ in which every Fitting factor $\F_{i+1}(G)/\F_i(G)$ has abelian Sylow $2$-subgroup for every $1\leq i\leq 9$. The result of Moreto and Wolf is a consequence of a theorem on orbits of linear groups \cite[Theorem E]{Mor-Wol}. This has been improved by Yang \cite[Theorem 3.5]{Yan} with consequences on non-vanishing elements. As a consequence, Isaacs--Navarro--Wolf's conjecture holds for solvable groups in which every Fitting factor $\F_{i+1}(G)/\F_i(G)$ has abelian Sylow $2$-subgroup, for every $1\leq i\leq 7$. We are going to show that Conjecture \ref{conj:m-INW} holds under the same hypothesis. We start with the following preliminary result which should be compared with \cite[Lemma 2.3]{Isa-Nav-Wol}.

\begin{lem}
\label{lem:Fixed point}
Let $N\unlhd G$ and $x\in\mathcal{N}_m(G)$. If $N$ is abelian and $N\cap \Phi(G)=1$, then $x$ fixes a character in each $G$-orbit on $\irr(N)$.
\end{lem}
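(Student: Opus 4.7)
The plan is to mimic \cite[Lemma 2.3]{Isa-Nav-Wol}, but to detect each $G$-orbit on $\irr(N)$ by a specifically \emph{monomial} irreducible character, so that the weaker hypothesis $x\in\Nm(G)$ is already enough. The key preparatory step is the classical fact that an abelian normal subgroup with $N\cap\Phi(G)=1$ admits a complement in $G$; fix such a decomposition $G=N\rtimes H$. This is where the Frattini hypothesis is really used, and it is crucial: it is exactly what will let a linear character of $N$ be extended to a linear character on its inertia subgroup, producing a monomial (rather than merely irreducible) induced character in what follows.

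Now fix a $G$-orbit $\mathcal{O}\subseteq\irr(N)$ with representative $\lambda$, let $T$ denote the stabiliser of $\lambda$ in $G$, and set $H_\lambda:=H\cap T$, so that $T=N\rtimes H_\lambda$. Since every element of $H_\lambda$ stabilises $\lambda$, the rule $\tilde\lambda(nh):=\lambda(n)$ (for $n\in N$, $h\in H_\lambda$) defines a linear character of $T$ extending $\lambda$. The Clifford correspondence then gives $\chi:=\ind_T^G(\tilde\lambda)\in\irr(G)$, and since $\chi$ is induced from a linear character it lies in $\irr_m(G)$. The hypothesis $x\in\Nm(G)$ now forces $\chi(x)\neq 0$, so by the induced character formula
\[
\chi(x)=\frac{1}{|T|}\sum_{\substack{g\in G\\ g^{-1}xg\in T}}\tilde\lambda(g^{-1}xg)
\]
the indexing set must be non-empty. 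For any such $g$, the condition $g^{-1}xg\in T$ is equivalent to $x\in gTg^{-1}$, which is precisely the stabiliser of $\lambda^{g^{-1}}$; hence $x$ fixes the character $\lambda^{g^{-1}}\in\mathcal{O}$. As $\mathcal{O}$ was arbitrary, the lemma follows.

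I do not foresee any genuine obstacle: once the complement is in place, the remainder is pure Clifford theory combined with the definition of $\Nm(G)$. The only slightly delicate point is making sure that $\chi$ is truly \emph{monomial} and not merely irreducible, which is exactly the role of the linear extension $\tilde\lambda$ provided by the complement; this is the single upgrade required over the argument in \cite[Lemma 2.3]{Isa-Nav-Wol}.
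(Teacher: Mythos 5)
Your proof is correct and follows essentially the same route as the paper: complement $N$ via Gaschütz, extend $\lambda$ linearly to its inertia group $T$ using the complement, induce to get a monomial irreducible by Clifford correspondence, and read off from $\chi(x)\neq 0$ that some conjugate of $x$ lies in $T$, i.e.\ that $x$ fixes a conjugate of $\lambda$. The only cosmetic difference is that you construct the extension $\tilde\lambda$ explicitly (correctly checking it is multiplicative because $H_\lambda$ stabilises $\lambda$) where the paper cites \cite[19.12]{Hup98}.
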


\begin{proof}
Fix $\lambda\in \irr(N)$. By hypothesis $N$ has a complement in $T:=G_\lambda$ \cite[III.4.4]{Hup67}. Thus $\lambda$ extends to $\wh{\lambda}\in\irr(T)$ by \cite[19.12]{Hup98} and $\chi:=\wh{\lambda}^G\in \irr_m(G)$ by the Clifford correspondence. Since $x\in\mathcal{N}_m(G)$, there exists $g\in G$ such that $gxg^{-1}\in T$. In particular $\lambda^g$ is fixed by $x$.
\end{proof}

Recall that a chief factor $K/L$ of $G$ is said to be \emph{non-Frattini} if $K/L\cap \Phi(G/L)=1$. In this case, if $K/L$ is abelian, then $K/L$ has a complement in $G/L$.

\begin{prop}
\label{prop:m-INW conjecture for odd order groups}
Let $G$ be solvable and let $x\in \mathcal{N}_m(G)$. Then the order of $\F(G)x$ in $G/\F(G)$ is a power of $2$. In particular Conjecture \ref{conj:m-INW} holds for groups of odd order. 
\end{prop}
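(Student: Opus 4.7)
The plan is to argue by induction on $|G|$ and take a counterexample $G$ of minimal order: some $x \in \Nm(G)$ for which $o(x\F(G))$ is not a $2$-power. First I would reduce to the case $\Phi(G) = 1$. If some minimal normal subgroup $M$ of $G$ is contained in $\Phi(G)$, then $\F(G)/M = \F(G/M)$ (a standard fact when $M \leq \Phi(G)$), and $xM \in \Nm(G/M)$ because every irreducible monomial character of $G/M$ inflates to an irreducible monomial character of $G$ taking the same value on $x$. By minimality of $G$, the order of $xM \cdot \F(G/M) = x\F(G)$ in $(G/M)/\F(G/M) \cong G/\F(G)$ would be a $2$-power, contradicting the choice of $x$. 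Hence $\Phi(G) = 1$. Since $G$ is solvable, $\Phi(\O_p(G)) \leq \Phi(G) = 1$ forces each $\O_p(G)$ to be elementary abelian, so $\F(G) = \prod_p \O_p(G)$ is abelian. Combined with the solvability identity $\c_G(\F(G)) \leq \F(G)$, this gives $\c_G(\F(G)) = \F(G)$, so $G/\F(G)$ acts faithfully on $\F(G)$.

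Next, I would apply Lemma \ref{lem:Fixed point} with $N := \F(G)$ (abelian and satisfying $N \cap \Phi(G) = 1$) to deduce that $x$ fixes a character in every $G$-orbit on $\irr(\F(G))$. Feeding the faithful action of $G/\F(G)$ on $\F(G)$ (or dually on $\irr(\F(G))$) into an orbit theorem for solvable linear groups — Espuelas' classical regular-orbit theorem if $|G|$ is odd, or a refinement producing a $2$-group stabilizer such as \cite[Theorem 3.5]{Yan} in general — would yield $\lambda \in \irr(\F(G))$ with stabilizer $G_\lambda / \F(G)$ a $2$-group. The lemma then provides $g \in G$ with $\lambda^g$ fixed by $x$, so $x \in g G_\lambda g^{-1}$. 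The image of $x$ in $G/\F(G)$ is therefore conjugate (in $G/\F(G)$) to an element of the $2$-group $G_\lambda/\F(G)$, hence is a $2$-element. Thus $o(x\F(G))$ is a $2$-power, the desired contradiction.

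The "in particular" assertion for odd $|G|$ follows instantly, since a $2$-element of $G/\F(G)$ must be trivial when $|G|$ is odd. The main obstacle is the orbit-theoretic step: while Espuelas' regular-orbit theorem settles the odd case cleanly (producing a $\lambda$ with trivial stabilizer modulo $\F(G)$), the general $2$-power statement rests on finding a character whose stabilizer modulo $\F(G)$ is actually a $2$-group, which is a strictly stronger orbit-theoretic input for arbitrary faithful solvable linear actions on elementary abelian modules.
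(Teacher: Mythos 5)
There is a genuine gap, and it sits exactly where you flag it: the orbit-theoretic input you need does not exist. You require a single $\lambda\in\irr(\F(G))$ whose stabilizer modulo $\F(G)$ is a $2$-group, but no such theorem is available for faithful completely reducible solvable linear groups, and in fact the statement is false. Take $G=(C_3\times C_3)\rtimes \SL_2(3)$ with the natural action; then $\Phi(G)=1$, $\F(G)=C_3\times C_3$, and $G/\F(G)\cong\SL_2(3)$ acts faithfully and irreducibly on $\irr(\F(G))$ with exactly two orbits, whose point stabilizers have orders $24$ and $3$ --- neither a $2$-group. So your argument cannot even get started on this group (for which the Proposition is nevertheless true). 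Moreover, Yang's Theorem 3.5 is not a ``$2$-group stabilizer'' refinement: it produces a point whose stabilizer is contained in a high Fitting term of the acting group (this is how the paper uses it in Proposition \ref{prop:Fitting 8}), which is a statement of a completely different nature. Even your odd-order fallback is more delicate than you suggest, since regular orbits on $V$ itself can fail for odd-order groups; in any case a regular orbit is far more than is needed.

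The paper's proof avoids any single-orbit statement. It takes $u$ to be the $2'$-part of $x$ and shows $u\in\c_G(K/L)$ for every non-Frattini chief factor $K/L$, which suffices by \cite[13.8]{Doe-Haw}. Writing $C:=\c_G(K/L)\geq\F(G)>1$, induction on $|G|$ applied to $G/C$ places $\overline{u}$ in $\F(G/C)$; Lemma \ref{lem:Fixed point} (applied in $G/L$ to the factor $K/L$, not to all of $\F(G)$) shows $\overline{u}$ fixes a point in \emph{every} $(G/C)$-orbit on $\irr(K/L)$; and then \cite[Theorem 4.2]{Isa-Nav-Wol} --- which says that an element of the Fitting subgroup of a faithful completely reducible solvable linear group fixing a point in each orbit must square to the identity --- forces $\overline{u}=1$. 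The structural hypotheses ($\overline{u}$ lies in the Fitting subgroup of the acting group, and fixes a point in all orbits rather than one favourable orbit) are precisely what substitute for the nonexistent $2$-group-stabilizer theorem. To repair your write-up you would need to replace your step 3 by this kind of ``fixed point in every orbit implies involution'' result and introduce the $2'$-part $u$ and the induction over chief-factor centralizers that makes its hypotheses available.
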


\begin{proof}
Let $u$ be the $2'$-part of $x$. We show that $u\in\F(G)$. By \cite[13.8]{Doe-Haw}, it is enough to fix a non-Frattini chief factor $K/L$ of $G$ and prove that $u\in \mathbf{C}_G(K/L)=:C$. Since $1\neq \F(G)\leq C$, induction on $|G|$ yields $\overline{u}\in\F\left(\overline{G}\right)$, where $\overline{G}:=G/C$. Furthermore $\overline{G}$ acts faithfully and irreducibly on $\irr(K/L)$. Consider the quotient $G/L$ and observe that $Lx\in \mathcal{N}_m(G/L)$ and that $K/L$ satisfies the hypothesis of Lemma \ref{lem:Fixed point} in $G/L$. It follows that $Lx$ fixes a point in every $(G/L)$-orbit on $\irr(K/L)$, and so that $\overline{x}$ fixes a point in every $\overline{G}$-orbit on $\irr(K/L)$. In particular $\overline{u}$ fixes a point in each $\overline{G}$-orbit on $\irr(K/L)$ and, by \cite[Theorem 4.2]{Isa-Nav-Wol}, we conclude that $\overline{u}^2=1$. This shows that $u\in\F(G)$
\end{proof}

Next, we need analogues of \cite[Theorem 4.4]{Isa-Nav-Wol} and of \cite[Theorem 5.2]{Yan} for monomial characters.

\begin{prop}
\label{prop:Nonvanishing nonFitting}
Let $G$ be a solvable group and assume there exists some $x\in\mathcal{N}_m(G)\setminus\F(G)$. Then there exists a subgroup $\F(G)\leq N\unlhd G$ such that, if $\overline{G}:=G/N$, then $\overline{x}$ is an involution of $\F(\overline{G})$ and $\overline{x}\nin \overline{A}$, for every abelian normal subgroup $\overline{A}\unlhd \overline{G}$. 
\end{prop}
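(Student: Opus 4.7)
The plan is to adapt the argument of \cite[Theorem~4.4]{Isa-Nav-Wol} to the monomial setting, with Proposition~\ref{prop:m-INW conjecture for odd order groups} replacing the classical statement on the $2$-power order of non-vanishing elements modulo $\F(G)$, and Lemma~\ref{lem:Fixed point} replacing \cite[Lemma~2.3]{Isa-Nav-Wol}. I proceed by induction on $|G|$.

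First, I would reduce to the case $x^{2}\in\F(G)$. By Proposition~\ref{prop:m-INW conjecture for odd order groups}, $x\F(G)$ is a $2$-element of $G/\F(G)$; let $k\geq 1$ be minimal with $x^{2^{k}}\in\F(G)$. If $k\geq 2$, set $M:=\F(G)\langle x^{2}\rangle^{G}$, a proper normal overgroup of $\F(G)$ containing $x^{2}$. Provided $x\notin M$, the image $xM$ is a non-trivial involution in $\mathcal{N}_{m}(G/M)$ (monomial characters inflate), and the inductive hypothesis applied to $(G/M,xM)$ yields a normal subgroup of $G/M$ whose preimage in $G$ is the desired $N$. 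The residual case $x\in M$ is handled by running the same construction on $x^{2^{k-1}}$, which is itself an involution modulo $\F(G)$.

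Once reduced to $x^{2}\in\F(G)$, I would define
\[
\mathcal{S}:=\{\,M\unlhd G\ \colon\ \F(G)\leq M,\ x\notin M,\ xM\in\F(G/M)\,\}
\]
and choose $N$ maximal in $\mathcal{S}$. The set $\mathcal{S}$ is non-empty: if $j\geq 2$ is minimal with $x\in\F_{j}(G)$, then $\F_{j-1}(G)\in\mathcal{S}$ because $x\F_{j-1}(G)\in\F(G/\F_{j-1}(G))$. Since $x^{2}\in\F(G)\leq N$ and $x\notin N$, the element $\overline{x}:=xN$ is an involution of $\F(\overline{G})$, where $\overline{G}:=G/N$. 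Suppose, towards a contradiction, that $\overline{x}\in\overline{A}$ for some abelian normal subgroup $\overline{A}\unlhd\overline{G}$. Any $\overline{G}$-invariant subgroup $\overline{B}\leq\overline{A}$ with $\overline{x}\notin\overline{B}$ would pull back to an element of $\mathcal{S}$ strictly larger than $N$ (nilpotent images are nilpotent, so $xB\in\F(G/B)$), contradicting maximality. Hence $\overline{x}$ lies in every non-trivial $\overline{G}$-invariant subgroup of $\overline{A}$; picking a minimal such subgroup $\overline{M}$ exhibits $\overline{x}$ in a chief factor of $\overline{G}$, necessarily an elementary abelian $2$-group. One then invokes Lemma~\ref{lem:Fixed point}, after a further quotient that renders an appropriate chief factor of $\overline{G}$ non-Frattini and transverse to $\overline{x}$, to construct a monomial character of $G$ on which $x$ must vanish, contradicting $x\in\mathcal{N}_{m}(G)$.

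The main obstacle is the last step: Lemma~\ref{lem:Fixed point} is vacuous when applied directly to the minimal subgroup $\overline{M}$ containing $\overline{x}$, since every character of the abelian $\overline{M}$ is stabilised by $\overline{x}\in\overline{M}$. The technical heart of the proof is locating a non-Frattini chief factor of $\overline{G}$ on which some $\overline{G}$-orbit of characters admits no $\overline{x}$-fixed point, thereby producing the vanishing monomial character needed. This delicate extraction parallels the analogous argument in the proof of \cite[Theorem~4.4]{Isa-Nav-Wol}, adapted to the monomial context by invoking Lemma~\ref{lem:Fixed point} in place of \cite[Lemma~2.3]{Isa-Nav-Wol}.
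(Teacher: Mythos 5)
There is a genuine gap, and you flag it yourself: the entire content of the proposition lives in the step you leave open. Your argument correctly reduces, via maximality of $N$ in $\mathcal{S}$, to the situation where $\overline{x}$ lies in every non-trivial normal subgroup of $\overline{G}$ contained in $\overline{A}$, hence in a minimal normal (elementary abelian $2$-) subgroup $\overline{M}$. But at that point Lemma \ref{lem:Fixed point} applied to $\overline{M}$ is vacuous, as you note, and no mechanism for producing the contradicting monomial character is supplied; "locating a non-Frattini chief factor transverse to $\overline{x}$" is precisely the assertion that needs proof, not a routine extraction. The paper avoids this impasse by reversing the order of choices: it first takes $M$ maximal with $Mx\nin\F(G/M)$ (the \emph{complementary} condition to your $\mathcal{S}$), passes to $G/M$, and then uses \cite[13.8]{Doe-Haw} to produce a non-Frattini chief factor $K/L$ with $x\nin C:=\mathbf{C}_G(K/L)$; the subgroup $N$ is \emph{defined} to be $C$. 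Since $\overline{G}=G/C$ acts faithfully and irreducibly on $\irr(K/L)$ and Lemma \ref{lem:Fixed point} (applied in $G/L$) shows $\overline{x}$ fixes a point in every orbit, \cite[Theorem 4.2]{Isa-Nav-Wol} delivers both conclusions at once: $\overline{x}^2=1$ and $\overline{x}$ lies in no abelian normal subgroup. You never invoke that module-theoretic theorem, and instead attempt to re-derive its conclusions ad hoc; that is the missing idea.

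A secondary problem is your preliminary reduction to $x^2\in\F(G)$. It is unnecessary (the involution property falls out of \cite[Theorem 4.2]{Isa-Nav-Wol} for free), and as executed it does not close: applying the inductive hypothesis to $(G/M,xM)$ with $M=\F(G)\langle x^2\rangle^G$ requires $xM\nin\F(G/M)$, which you do not verify, and in the residual case $x\in M$ you propose to run the construction on $x^{2^{k-1}}$ — but the proposition's conclusion concerns the image of $x$ itself, so establishing it for a power of $x$ proves nothing about $x$.
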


\begin{proof}
Let $M$ be a maximal element of the non-empty set $\{ N\unlhd G\mid Nx\nin\F(G/N)\}$. Replacing $G$ with $G/M$, we may assume that $Nx\in\F(G/N)$, for every $1<N\unlhd G$. Since $x\nin\F(G)$, by \cite[13.8]{Doe-Haw}, there exists a non-Frattini chief factor $K/L$ of $G$ such that $x\nin C:=\mathbf{C}_G(K/L)$. We claim that $C$ is the normal subgroup we are looking for. Let $\overline{G}:=G/C$ and observe that $1\neq \overline{x}\in\F(\overline{G})$ and that $\overline{G}$ acts faithfully and irreducibly on $\irr(K/L)$. By Lemma \ref{lem:Fixed point} we deduce that $Lx$ fixes a point in each $(G/L)$-orbit on $\irr(K/L)$. It follows that $\overline{x}$ fixes a point in each $\overline{G}$-orbit on $\irr(K/L)$ and we conclude by \cite[Theorem 4.2]{Isa-Nav-Wol}.
\end{proof}

\begin{prop}
\label{prop:Fitting 8}
Let $G$ be a solvable group. Then there exists $\mu\in\irr_m(\F_8(G))$ such that $\mu^G\in\irr_m(G)$. In particular $\Nm(G)\subseteq F_8(G)$.
\end{prop}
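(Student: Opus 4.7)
The second assertion follows at once from the first: since $\F_8(G) \unlhd G$, any character $\mu^G$ induced from $\mu \in \irr(\F_8(G))$ vanishes on $G \setminus \F_8(G)$, and if $\mu^G \in \irr_m(G)$ then no element of $\Nm(G)$ can lie outside $\F_8(G)$.

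To prove the main assertion I would mimic the construction used by Yang in \cite[Theorem 3.5]{Yan} to show $\N(G) \subseteq \F_8(G)$, but with care taken to produce a monomial character. Concretely, the aim is to construct a subgroup $H \leq \F_8(G)$ together with a linear character $\lambda \in \irr(H)$ such that $\mu := \lambda^{\F_8(G)}$ is irreducible and has inertia subgroup $\F_8(G)$ in $G$. Since $\lambda$ is linear, $\mu$ is monomial by definition; and since $\mu^G = \lambda^G$ by transitivity of induction, $\mu^G$ is monomial provided it is irreducible, which follows from the Clifford correspondence applied with $\F_8(G) \unlhd G$ under the inertia condition $I_G(\mu) = \F_8(G)$. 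The pair $(H, \lambda)$ would be built inductively along the Fitting series: starting from the trivial character of the trivial subgroup, at the $(i+1)$-st step I would enlarge $H$ within $\F_{i+1}(G)$ and choose an extension of $\lambda$ so that its $G$-stabiliser strictly shrinks.

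The main obstacle is engineering the stabiliser to shrink to $\F_8(G)$ in exactly eight steps while keeping $\lambda$ linear and $H$ inside $\F_8(G)$. This is handled by applying, at each inductive step, the orbit theorems of Moret\'o--Wolf \cite[Theorem E]{Mor-Wol} and Yang \cite[Theorem 3.5]{Yan} to the action of the relevant section of $G$ on an abelian chief factor lying in $\F_{i+1}(G)/\F_i(G)$; such factors are naturally identified with spaces of linear characters via duality, so the linearity of $\lambda$ is preserved automatically. The bound of eight Fitting layers is exactly that supplied by Yang's theorem for the shrinking process to terminate with trivial stabiliser modulo $\F_8(G)$, paralleling the corresponding bound in the non-monomial statement $\N(G)\subseteq\F_8(G)$.
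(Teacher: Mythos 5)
Your reduction of the second assertion to the first is correct: since $\F_8(G)\unlhd G$, the induced character $\mu^G$ vanishes identically on $G\setminus\F_8(G)$, so once $\mu^G\in\irr_m(G)$ no monomial-non-vanishing element can lie outside $\F_8(G)$. You have also correctly identified Yang's orbit theorem as the engine. But the main construction you propose is left entirely unexecuted, and it is aimed in the wrong direction: there is no need to rebuild an eight-step induction along the Fitting series, because \cite[Theorem 3.5]{Yan} already delivers the required pair $(H,\lambda)$ in a single application. After reducing to $\Phi(G)=1$ (using $\F_i(G/\Phi(G))=\F_i(G)/\Phi(G)$ and induction on $|G|$), the subgroup $\F(G)$ is abelian, complemented in $G$, and $\irr(\F(G))$ is a completely reducible faithful $(G/\F(G))$-module; Yang's theorem then produces $\lambda\in\irr(\F(G))$ with $T:=G_\lambda\leq\F_8(G)$, the complement gives a linear extension $\wh{\lambda}\in\irr(T)$, and $\mu:=\wh{\lambda}^{\F_8(G)}$ together with $\mu^G=\wh{\lambda}^G$ is irreducible by the Clifford correspondence over $\F(G)$. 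The bound $8$ comes out of this one invocation, not out of a layer-by-layer shrinking argument.

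The concrete gaps in your sketch are therefore: (1) the reduction to $\Phi(G)=1$ is missing, and without it none of the ingredients you need are available --- $\F(G)$ need not be complemented, linear characters need not extend to their inertia groups, and the module you want to apply the orbit theorems to need not be completely reducible and faithful; (2) the inductive step (``enlarge $H$ within $\F_{i+1}(G)$ and choose an extension so the stabiliser strictly shrinks'') is not defined --- you do not say how to extend $\lambda$ across a possibly non-abelian Fitting layer, why the resulting character stays linear, why the stabiliser shrinks by a controlled amount at each stage, or why the induced character $\lambda^{\F_8(G)}$ ends up irreducible; and (3) the claim that ``strictly shrinks'' terminates at $\F_8(G)$ after exactly the right number of steps is asserted, not proved. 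As written, the proposal is a plan whose central difficulty is deferred to the very theorems that, applied as black boxes in the way the paper does, make the difficulty disappear.
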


\begin{proof}
Since $\F_i(G/\Phi(G))=\F_i(G)/\Phi(G)$ for any $i\geq 1$, proceeding by induction on the $|G|$, we may assume $\Phi(G)=1$. Set $F:=\F_8(G)$. Notice that $\irr(\F(G))$ is a completely reducible and faithful $(G/\F(G))$-module and that, by \cite[3.5]{Yan}, there exists $\lambda\in\irr(\F(G))$ such that $T:=G_\lambda\leq F$. Using \cite[III.4.4]{Hup67} and \cite[19.12]{Hup98}, we deduce that $\lambda$ extends to a linear character $\wh{\lambda}\in\irr(T\mid \lambda)$. By the Clifford correspondence $\mu:=\wh{\lambda}^F$ has the required properties.
\end{proof}

As a consequence we obtain the result mentioned at the beginning of the section.

\begin{cor}
\label{cor:m-INW conjecture for abelian Sylow 2-subgroup of Fitting factors}
Conjecture \ref{conj:m-INW} holds for every solvable group $G$ in which $\F_{i+1}(G)/\F_i(G)$ has an abelian Sylow $2$-subgroup, for every $1\leq i\leq 7$.
\end{cor}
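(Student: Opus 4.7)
The plan is to run a descending induction on the Fitting series, with Proposition~\ref{prop:Fitting 8} providing the starting bound and each step reduced to a single base claim that combines Propositions~\ref{prop:m-INW conjecture for odd order groups} and~\ref{prop:Nonvanishing nonFitting} with the Fitting-factor hypothesis.

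By Proposition~\ref{prop:Fitting 8} we already have $\Nm(G)\subseteq\F_8(G)$, so it will suffice to prove $\Nm(G)\cap\F_k(G)\subseteq\F_{k-1}(G)$ for each $k\in\{2,\ldots,8\}$. Given $x\in\Nm(G)\cap\F_k(G)$, we pass to $\bar G:=G/\F_{k-2}(G)$ (with $\F_0(G):=1$). The image $\bar x$ lies in $\F_2(\bar G)=\F_k(G)/\F_{k-2}(G)$, and $\bar x\in\Nm(\bar G)$ because every monomial character of $\bar G$ pulls back through the quotient map to a monomial character of $G$. Moreover $\F_2(\bar G)/\F(\bar G)\cong\F_k(G)/\F_{k-1}(G)$ inherits the abelian-Sylow-$2$ property from the hypothesis (with index $i=k-1\in\{1,\ldots,7\}$). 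Hence the inductive step reduces to the following base claim: if $G$ is solvable with $\F_2(G)/\F(G)$ having abelian Sylow $2$-subgroup, then $\Nm(G)\cap\F_2(G)\subseteq\F(G)$.

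For the base claim we would argue by contradiction. Suppose $x\in\Nm(G)\cap\F_2(G)\setminus\F(G)$. By Proposition~\ref{prop:m-INW conjecture for odd order groups} the $2'$-part of $x$ lies in $\F(G)$, so the image $\bar x:=x\F(G)$ is a nontrivial $2$-element of $\bar G:=G/\F(G)$, and it lies in $\F(\bar G)=\F_2(G)/\F(G)$ because $x\in\F_2(G)$. Since $\F(\bar G)$ is nilpotent, $\bar x$ belongs to its Sylow $2$-subgroup $\bar P$, which is characteristic in $\F(\bar G)$ (hence normal in $\bar G$) and abelian by hypothesis. On the other hand, Proposition~\ref{prop:Nonvanishing nonFitting} applied to $x\in\Nm(G)\setminus\F(G)$ yields $N\unlhd G$ with $\F(G)\leq N$ such that, setting $H:=G/N$, the image of $x$ in $H$ is an involution in $\F(H)$ belonging to no abelian normal subgroup of $H$. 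Since $N\geq\F(G)$, $H$ is a quotient of $\bar G$, and so the image of the abelian normal subgroup $\bar P$ in $H$ is an abelian normal subgroup containing the image of $x$, contradicting Proposition~\ref{prop:Nonvanishing nonFitting}.

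We do not expect a substantial obstacle: the deep linear-group input has already been absorbed into Propositions~\ref{prop:Fitting 8} and~\ref{prop:Nonvanishing nonFitting}, and the argument above is a direct combination of them with Proposition~\ref{prop:m-INW conjecture for odd order groups}. The one item that deserves explicit verification is the behavior of monomial characters under quotients used in the descent, which follows from the fact that pulling back a linear inducing character through a quotient map again yields a linear inducing character.
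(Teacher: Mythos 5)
Your proposal is correct and follows essentially the same route as the paper: use Proposition \ref{prop:Fitting 8} to place a monomial-non-vanishing element in $\F_8(G)$, reduce via a quotient by a lower Fitting term to the case $x\in\F_2(G)\setminus\F(G)$, and then contradict Proposition \ref{prop:Nonvanishing nonFitting} by exhibiting an abelian normal subgroup (the image of the abelian Sylow $2$-subgroup of the relevant Fitting factor) containing the image of $x$. The only cosmetic differences are your packaging as a descending induction and your extra appeal to Proposition \ref{prop:m-INW conjecture for odd order groups} to see that $x$ is a $2$-element modulo $\F(G)$, which the paper gets for free from the involution statement in Proposition \ref{prop:Nonvanishing nonFitting}.
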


\begin{proof}
Let $x\in\mathcal{N}_m(G)$ and suppose that $x\nin\F(G)$. For $i\geq 1$, set $F_i:=\F_i(G)$ and notice that, by Proposition \ref{prop:Fitting 8}, there exists $1\leq i\leq 7$ such that $x\in F_{i+1}\setminus F_i$. Replacing $G$ with $G/F_{i-1}$, we may assume that $x\in F_2\setminus F_1$ and that $F_2/F_1$ has abelian Sylow $2$-subgroups. Let $F_1\leq N\unlhd G$ be as in Proposition \ref{prop:Nonvanishing nonFitting} and set $\overline{G}:=G/N$. Then $\overline{x}$ lies in a Sylow $2$-subgroup of $\overline{F}_2$ and, since such a subgroup is abelian and normal in $\overline{G}$, we have a contradiction.
\end{proof}

We end this section by mentioning two other cases in which Conjecture \ref{conj:m-INW} holds. First, suppose that $G/\F(G)$ is supersolvable. Under this hypothesis Issacs-Navarro-Wolf's conjecture holds by the results of \cite{He}.

\begin{prop}
\label{prop:m-INW for nilpotent-by-supersolvable}
Conjecture \ref{conj:m-INW} holds whenever $G/\F(G)$ is supersolvable.
\end{prop}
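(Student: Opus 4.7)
The plan is to mirror the proof of \cite{He}, which establishes $\N(G)\subseteq\F(G)$ under the hypothesis that $G/\F(G)$ is supersolvable, replacing \cite[Lemma 2.3]{Isa-Nav-Wol} by our Lemma \ref{lem:Fixed point}. The key observation is that the only character-theoretic input extracted in He's argument from $x\in\N(G)$ is the statement of \cite[Lemma 2.3]{Isa-Nav-Wol}, namely that $x$ fixes a character in each $G$-orbit on $\irr(N)$ for every abelian normal subgroup $N$ with $N\cap\Phi(G)=1$, and Lemma \ref{lem:Fixed point} delivers exactly the same conclusion starting from $x\in\Nm(G)$. All remaining steps in \cite{He} are group-theoretic and concern faithful supersolvable actions on abelian modules; in particular they make no use of the vanishing of arbitrary (non-monomial) irreducible characters.

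Concretely, I would proceed by induction on $|G|$. A standard reduction allows one to assume $\Phi(G)=1$, using the identity $\F(G/\Phi(G))=\F(G)/\Phi(G)$ so that the supersolvable hypothesis on $G/\F(G)$ passes to the quotient. Under this assumption, $F:=\F(G)$ is abelian and $G/F$ acts faithfully and supersolvably on $F$. Given $x\in\Nm(G)$, Lemma \ref{lem:Fixed point} applied to $N=F$ ensures that $x$ fixes a character in each $G$-orbit on $\irr(F)$. The orbit-theoretic core of \cite{He} (a regular-orbit type result for supersolvable action on abelian modules, treated via the usual coprime/non-coprime dichotomy) then produces some $\lambda\in\irr(F)$ whose stabiliser in $G$ coincides with $F$; consequently a $G$-conjugate of $x$ lies in $F$, and hence $x\in F$.

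The main obstacle is not a substantive new difficulty but the careful book-keeping required to certify that no intermediate step of \cite{He} secretly uses the vanishing of non-monomial irreducible characters. An alternative route, closer in spirit to the proof of Corollary \ref{cor:m-INW conjecture for abelian Sylow 2-subgroup of Fitting factors}, would be to apply Proposition \ref{prop:Nonvanishing nonFitting} and analyse the resulting involution $\overline{x}\in\F(\overline{G})$ via Huppert's embedding of supersolvable irreducible linear groups into semi-linear groups. However the purely structural output of Proposition \ref{prop:Nonvanishing nonFitting} is not by itself incompatible with $\overline{G}$ being supersolvable, since supersolvable semi-linear groups can carry involutions in their Fitting subgroup that lie in no abelian normal subgroup; one would therefore still need to feed the fixed-point content of Lemma \ref{lem:Fixed point} back in, so the direct mirror-proof approach is cleaner.
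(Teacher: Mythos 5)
There is a genuine gap at the heart of your sketch. You claim that the ``orbit-theoretic core'' of \cite{He} produces some $\lambda\in\irr(F)$ with $G_\lambda=F$, i.e.\ a regular orbit of $G/F$ on $\irr(F)$. No such regular orbit need exist for a supersolvable group acting faithfully and completely reducibly: take $G=S_4$, so $F=\F(G)\cong C_2\times C_2$ and $G/F\cong S_3=\GL_2(2)$ acting naturally on $\irr(F)$; every nontrivial $\lambda$ has stabiliser of index $3$ in $G/F$. Worse, the conclusion of Lemma \ref{lem:Fixed point} alone cannot force $x\in F$ even in principle: in this same example a transposition fixes a character in every $G$-orbit on $\irr(F)$ (it fixes $1_F$ and exactly one nontrivial character, and there are only two orbits) yet lies outside $\F(S_4)$. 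So your premise that the only character-theoretic input needed from $x\in\Nm(G)$ is the fixed-point property is false; one must actually exhibit a monomial character that vanishes at $x$, and your argument never does so. Your dismissal of the route via Proposition \ref{prop:Nonvanishing nonFitting} is reasonable, but the ``direct mirror'' route you adopt instead does not close.

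The paper's proof is genuinely different and does not invoke Lemma \ref{lem:Fixed point} at all. After reducing to $\Phi(G)=1$, so that $F:=\F(G)$ is abelian with a supersolvable complement $H$, it uses that $G/F$ is supersolvable, hence an M-group, so $\Nm(G/F)=\N(G/F)\subseteq \mathbf{Z}(\F(G/F))=:Z/F$ by \cite[Theorem B]{Isa-Nav-Wol}; inflation of monomial characters then gives $\Nm(G)\subseteq Z$. The regular-orbit theorem is applied only to the \emph{abelian} group $A:=H\cap Z$ (where it is valid, by \cite[Lemma 3.1]{Isa-Nav-Wol}), yielding $\lambda$ with $Z_\lambda=F$; then $\vartheta:=\lambda^Z$ vanishes on $Z\setminus F$, every $\chi\in\irr(G\mid\vartheta)$ vanishes on $Z\setminus F$ by Clifford theory, and the particular choice $\chi=\wh{\lambda}^G$ is monomial. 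This explicit vanishing monomial character, combined with $\Nm(G)\subseteq Z$, gives $\Nm(G)\subseteq F$. To repair your proof you would need to supply precisely this two-step structure (confinement of $\Nm(G)$ to a subgroup with abelian image in $G/F$, then a vanishing monomial character on the complement of $F$ in that subgroup), not merely swap one fixed-point lemma for another.
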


\begin{proof}
By induction on $|G|$ we may assume $\Phi(G)=1$. Now, $F:=\F(G)$ is abelian and has a supersolvable complement in $G$. By \cite[Theorem B]{Isa-Nav-Wol} and \cite[24.3]{Hup98}, we deduce that $\Nm(G/F)\subseteq \mathbf{Z}(\F(G/F))=:Z/F$ and that $\mathcal{N}_m(G)\subseteq Z$. Let $A:=H\cap Z$ and observe that $Z=F\rtimes A$. The abelian group $A$ acts faithfully on the completely reducible $A$-module $\irr(F)$. Noticing that, in this situation, complete reducibility coincide with the condition $(|G|,|\irr(F)|)$, we can find $\lambda\in\irr(F)$ such that $A_\lambda=1$ \cite[Lemma 3.1]{Isa-Nav-Wol}. In particular $Z_\lambda=F$ and $\vartheta:=\lambda^Z\in\irr(Z)$. Since $\vartheta$ vanishes on $Z\setminus F$, and so do all its conjugates, we conclude that every character $\chi\in\irr(G\mid \vartheta)$ vanishes on $Z\setminus F$. On the other hand, observe that $\lambda$ extends to $\wh{\lambda}\in\irr(G_\lambda)$ and that $\chi:=\wh{\lambda}^G\in\irr_m(G)$. Since $\chi$ lies over $\vartheta$, it follows that $\chi$ vanishes on $Z\setminus F$ and so $\Nm(G)\subseteq F$.
\end{proof}

Finally, assume that $4$ does not divide the degree of any irreducible monomial character.

\begin{lem}
\label{lem:m-Le-Na-Wo}
Let $G$ be a solvable group and suppose that $p^2$ does not divide $\chi(1)$, for all $\chi\in\irr_m(G)$. Then the Sylow $p$-subgroup of $\F_{i+1}(G)/\F_i(G)$ has order $1$ or $p$, for every $i\geq 1$.
\end{lem}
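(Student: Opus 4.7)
I prove the contrapositive: assuming that, for some $i \geq 1$, the Sylow $p$-subgroup of $\F_{i+1}(G)/\F_i(G)$ has order at least $p^2$, I construct a monomial character $\chi \in \irr_m(G)$ with $p^2 \mid \chi(1)$. Two standard reductions set the stage. First, by induction on $|G|$ and using $\F_j(G/\Phi(G)) = \F_j(G)/\Phi(G)$ together with the fact that monomial characters of $G/\Phi(G)$ inflate to monomial characters of $G$ of the same degree, I reduce to $\Phi(G) = 1$. Second, since $\F(G/\F_{i-1}(G)) = \F_i(G)/\F_{i-1}(G)$ and $\F_2(G/\F_{i-1}(G))/\F(G/\F_{i-1}(G)) = \F_{i+1}(G)/\F_i(G)$, the same inflation principle reduces the problem to $i = 1$.

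Write $F := \F(G)$; under $\Phi(G) = 1$, $F$ is abelian with $\c_G(F) = F$ and admits a complement in $G$ by \cite[III.4.4]{Hup67}. Let $P/F$ be the Sylow $p$-subgroup of $\F_2(G)/F$, normal in $G/F$ because $\F_2(G)/F$ is nilpotent, and suppose $|P/F| \geq p^2$. I record two structural facts. The Fitting subgroup $F$ cannot be a $p$-group, for otherwise $P$ would be a normal $p$-subgroup of $G$ strictly containing $\O_p(G) = F$. Letting $K \leq P/F$ be the kernel of the conjugation action on the (nontrivial) Hall $p'$-subgroup $F_{p'}$ of $F$, one also has $K \neq P/F$: if instead $F_{p'}$ were central in $P$, then $P = F_{p'} \times Q$ with $Q$ the characteristic Sylow $p$-subgroup of $P$, and $Q$ would be a normal $p$-subgroup of $G$ contained in $\O_p(G) = F_p$, contradicting $|Q| = |F_p|\,|P/F| > |F_p|$.

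The core of the argument is to exhibit $\lambda \in \irr(F)$ whose $G$-orbit has size divisible by $p^2$. Given such $\lambda$, the complement of $F$ in $G$ restricts to a complement in $G_\lambda$, so \cite[19.12]{Hup98} yields a linear extension $\wh\lambda \in \irr(G_\lambda)$; the Clifford correspondence then makes $\chi := \wh\lambda^G$ an irreducible monomial character of $G$ with $\chi(1) = [G:G_\lambda]$ divisible by $p^2$, contradicting the hypothesis. Since $F$ is abelian, $F \leq G_\lambda$, and hence $[P:P_\lambda]$ is a $p$-power dividing $[G:G_\lambda]$; it therefore suffices to produce $\lambda$ with $P/F$-orbit of size at least $p^2$ on $\irr(F)$.

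The main obstacle is producing this large orbit. Using $\irr(F) \simeq \irr(F_p) \times \irr(F_{p'})$, I build $\lambda = \lambda_p \otimes \lambda_{p'}$ as follows. The quotient $(P/F)/K$ is a nontrivial $p$-group acting faithfully and coprimely on the abelian group $F_{p'}$, so by a classical regular orbit theorem for coprime $p$-group actions on abelian groups there exists $\lambda_{p'} \in \irr(F_{p'})$ with $P/F$-stabilizer equal to $K$. The subgroup $K$, being a nontrivial $p$-group acting faithfully on $F_p$ (by faithfulness of the overall action of $P/F$ on $F$), has an orbit on $\irr(F_p)$ of $p$-power size at least $p$; choose $\lambda_p$ in such an orbit. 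The stabilizer of $\lambda$ in $P/F$ is then $K_{\lambda_p}$, and its orbit has size $[(P/F):K]\cdot[K:K_{\lambda_p}] \geq p\cdot p = p^2$, completing the construction. The most delicate point is the coprime regular-orbit step, where one must be mindful of the classical exceptional configurations.
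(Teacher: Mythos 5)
Your overall architecture (contrapositive, reduction to $i=1$ and $\Phi(G)=1$, production of $\lambda\in\irr(F)$ with orbit size divisible by $p^2$, then extend and induce via the Clifford correspondence) is sound and matches the paper's strategy up to the point where the large orbit must actually be produced. The gap is in that key step: the ``classical regular orbit theorem'' you invoke --- that a nontrivial $p$-group acting faithfully and coprimely on an abelian group has a regular orbit on its character group --- is false, and it fails precisely for $p=2$, which is the case the paper needs this lemma for. The standard counterexample is $D_8\leq\GL_2(3)$ acting on $V=\mathbb{F}_3^2$: the four non-central involutions fix pointwise the four lines of $V$, so every nontrivial character of $V$ has nontrivial stabilizer. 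This configuration genuinely arises in your setup: for $G=\mathbb{F}_3^2\rtimes D_8$ one has $F=\F(G)=\mathbb{F}_3^2=F_{p'}$, $P/F\cong D_8$ of order $8>p^2$, and $K=1$, so your construction requires a regular orbit of $D_8$ on $\irr(F_{p'})$ that does not exist. (The lemma is still true there: the orbits of size $4$ yield monomial characters of degree $4$, but your argument does not find them because it insists on a trivial stabilizer.) Your closing caveat about being ``mindful of the classical exceptional configurations'' is exactly where the proof stops being a proof; the exceptions are unavoidable and dealing with them is the real content of the step. A secondary slip: you assert $K\neq 1$, but nothing forces this (indeed $F_p$ may be trivial, as in the example); the orbit count happens to survive $K=1$, but only if the regular orbit exists.

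The paper circumvents all of this by not asking for a regular orbit of the full $p$-group at the outset. It first notes that, since every $\lambda\in\irr(F)$ extends to $G_\lambda$ and induces to a monomial irreducible character, the hypothesis forces $|S:S_\lambda|\in\{1,p\}$ for \emph{every} $\lambda$, where $S/F$ is the Sylow $p$-subgroup of $\F_2(G)/\F(G)$. It then applies Wolf's large-orbit theorem for faithful completely reducible modules (there is an orbit of size at least $|S:F|^{1/2}$, with no exceptional cases) to deduce $|S:F|\leq p^2$, hence that $S/F$ is abelian; only then does it invoke a regular-orbit theorem, in the form valid for \emph{abelian} groups acting faithfully and completely reducibly, to conclude $|S:F|\leq p$. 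To repair your argument you would have to replace the coprime regular-orbit step by a bound of Wolf's type --- an orbit of size at least $|P/F|^{1/2}$, or at least $\min(|P/F|,p^2)$ --- which is essentially the theorem the paper quotes rather than something elementary.
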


\begin{proof}
First, notice that it's enough to show the result for the Sylow $p$-subgroup $S/F$ of $F_2/F$, where $F:=\F(G)$ and $F_2:=\F_2(G)$. Furthermore, since $\F_i(G/\Phi(G))=\F_i(G)/\Phi(G)$, we may assume $\Phi(G)=1$. Now, $G$ splits over the abelian normal subgroup $F$ and every $\lambda\in\irr(F)$ has an extension $\wh{\lambda}\in\irr(G_\lambda)$. By the Clifford correspondence $\wh{\lambda}^G\in\irr_m(G)$ and so $p^2$ does not divide $|G:G_\lambda|$. In particular $|S:S_\lambda|$ is either $1$ or $p$. Next observe that, since $\irr(F)$ is a completely reducible and faithful $(S/F)$-module, by \cite[Theorem A]{Wol} there exists $\lambda_1\in\irr(F)$ such that $|S:F|^{1/2}\leq |S:S_{\lambda_1}|$. In particular $S/F$ is abelian. This, together with \cite[Lemma 3.1]{Isa-Nav-Wol}, implies that there exists $\lambda_2\in \irr(F)$ such that $F=S_{\lambda_2}$ and we conclude by the previous discussion.
\end{proof}

\begin{cor}
Conjecture \ref{conj:m-INW} holds whenever $G$ is a solvable group such that $4$ does not divide $\chi(1)$, for all $\chi\in\irr_m(G)$.
\end{cor}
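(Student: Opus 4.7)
The plan is to combine Lemma \ref{lem:m-Le-Na-Wo} (with $p=2$) with Corollary \ref{cor:m-INW conjecture for abelian Sylow 2-subgroup of Fitting factors}. Indeed, the hypothesis that $4$ does not divide $\chi(1)$ for every $\chi\in\irr_m(G)$ is exactly the assumption of Lemma \ref{lem:m-Le-Na-Wo} for the prime $p=2$. Applying that lemma, the Sylow $2$-subgroup of every Fitting factor $\F_{i+1}(G)/\F_i(G)$ has order $1$ or $2$ for every $i\geq 1$; in particular such a Sylow $2$-subgroup is cyclic, hence abelian.

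Therefore the hypothesis of Corollary \ref{cor:m-INW conjecture for abelian Sylow 2-subgroup of Fitting factors} is satisfied (in fact for all $i\geq 1$, not just $1\leq i\leq 7$), and Conjecture \ref{conj:m-INW} follows directly from that corollary, yielding $\Nm(G)\subseteq \F(G)$.

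There is essentially no obstacle here: the real work is absorbed by Lemma \ref{lem:m-Le-Na-Wo}, which in turn rests on Wolf's orbit theorem and Isaacs' transitive-linear-action trick used in the body of the section; the present corollary is just the bookkeeping step that feeds the output of that lemma into Corollary \ref{cor:m-INW conjecture for abelian Sylow 2-subgroup of Fitting factors}. One should merely note, when writing the proof, that no reduction modulo $\Phi(G)$ or induction on $|G|$ is required at this stage, since both ingredients have already been proved in full generality.
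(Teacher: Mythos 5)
Your proposal is correct and matches the paper's proof exactly: the paper also deduces the corollary by applying Lemma \ref{lem:m-Le-Na-Wo} with $p=2$ (so each Fitting factor has Sylow $2$-subgroup of order at most $2$, hence abelian) and then invoking Corollary \ref{cor:m-INW conjecture for abelian Sylow 2-subgroup of Fitting factors}. Your write-up simply spells out the bookkeeping that the paper leaves implicit.
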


\begin{proof}
This follows by Corollary \ref{cor:m-INW conjecture for abelian Sylow 2-subgroup of Fitting factors} and Lemma \ref{lem:m-Le-Na-Wo}. 
\end{proof}

Notice that Lemma \ref{lem:m-Le-Na-Wo} is an adaptation of \cite[Lemma 2.1]{Lew-Nav-Wol}. One may wonder if the main result of that paper holds by considering only monomial characters.

\begin{pb}
\label{pb:Fitting index}
Let $G$ be a solvable group and suppose that $p^2$ does not divide $\chi(1)$, for all $\chi\in\irr_m(G)$. Is it true that $|G:\F(G)|_p\leq p^2$?
\end{pb}

To end this section we mention that, by introducing the ideas used in this section in the setting of Brauer non-vanishing elements, one can prove a monomial version of the main result of \cite{Dol-Pac-San}.

\section{Gluck's conjecture}
\label{sec:Gluck}

Gluck's conjecture has been proved by Espuelas for groups of odd order \cite[Theorem 3.2]{Esp}, by Dolfi and Jabara for solvable groups with abelian Sylow $2$-subgroups \cite[Theorem 1]{Dol-Jab} and by Yang for solvable groups in which $3$ does not divide $|G:\mathbf{F}(G)|$ \cite[Theorem 2.5]{Yan-Gluck}. Other partial results are obtained in \cite{Cos}. Moreover, in \cite[Corollary 2.7]{Mor-Wol} the authors prove that $|G:\mathbf{F}(G)|\leq b^3(G)$ for every solvable group $G$. We show that Conjecture \ref{conj:m-Gluck} holds in all the above mentioned situations. First, we collect some results on actions of solvable linear groups. For every integer $n$, we denote by $\pi(n)$ the set of prime divisors of $n$.

\begin{theo}
\label{thm:Large orbits}
Let $V$ be a finite faithful completely reducible $G$-module and $\pi=\pi(V)$. Define $\gamma$ as follows:
\begin{enumerate}
\item Set $\gamma:=2/3$ if $G$ is $\pi$-solvable.
\item Set $\gamma:=1/2$ if one of the following holds:
\begin{itemize}
\item $G$ is solvable and $V\rtimes G$ has abelian Sylow $2$-subgroup.
\item $G$ is a solvable $3'$-group. 
\item $G$ is solvable and $V$ is primitive with either $|V|\neq 3^4$ or $|V|=3^4$ and $G$ not conjugate to a subgroup of $\emph{GL}(V)$ of order $1152$.
\item $G$ is $\pi$-solvable and $64\cdot 81$ doesn't divide $|V|$.
\end{itemize}
\end{enumerate}
Then there exists $v\in V$ such that $|\mathbf{C}_G(v)|\leq |G|^\gamma$.
\end{theo}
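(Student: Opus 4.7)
The overall strategy is that each sufficient condition listed in (i) and (ii) already appears in the literature as an orbit theorem for solvable or $\pi$-solvable linear groups, so the proof consists of invoking and assembling these results rather than producing something genuinely new. After the standard reduction, one may assume that $V$ is an irreducible faithful $G$-module, since a short $G$-orbit on one irreducible summand immediately yields the required centralizer bound on the whole module; for some of the bullets one further reduces to the primitive case via Clifford's theorem, checking that the centralizer bound is preserved under the induction.

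For part (i), the inequality $|\mathbf{C}_G(v)|\le|G|^{2/3}$ in the $\pi$-solvable setting is precisely Yang's improvement [Yan, Theorem 3.5] of the Moretó--Wolf orbit theorem [Mor-Wol, Theorem E]. For part (ii) I would handle the four bullets separately. The case in which $V\rtimes G$ has an abelian Sylow $2$-subgroup is the orbit statement underlying the Dolfi--Jabara proof of Gluck's conjecture in [Dol-Jab, Theorem 1]. The solvable $3'$-case is Espuelas's orbit theorem [Esp, Theorem 3.2], the source of Gluck's bound in odd order. The primitive case follows from the structure theorem for primitive solvable linear subgroups of $\GL(V)$ together with the explicit small-dimensional analysis, in which the configuration $|V|=3^4$ with a specific subgroup of $\GL(V)$ of order $1152$ turns out to be the unique obstruction to the $1/2$-power estimate. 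Finally, the bullet ``$64\cdot 81\nmid|V|$'' is obtained from a refinement of the $\pi$-solvable argument in which any failure of the $1/2$-bound is shown to force the simultaneous divisibilities $2^6\mid|V|$ and $3^4\mid|V|$; contrapositively, if $64\cdot 81\nmid|V|$ then the $1/2$-power bound must hold.

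The main obstacle is aligning the various formulations available in the sources with the uniform statement $|\mathbf{C}_G(v)|\le|G|^{\gamma}$ asked for here: the published results are often phrased in the dual module, in the language of regular orbits, or with slightly different exponents in sporadic low-dimensional cases, so careful bookkeeping is needed to show that each of them indeed delivers a centralizer of the stated order. The genuinely delicate point is the sporadic exception in the primitive bullet, namely $|V|=3^4$ with the specific order-$1152$ subgroup of $\GL(V)$: here the $1/2$-power bound really does fail, and it must be isolated and excluded by direct computation, or by appeal to the explicit classification of primitive solvable linear groups in small dimension.
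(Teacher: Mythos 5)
Your proposal takes essentially the same route as the paper, whose proof of this theorem is nothing more than a pointer to the five literature results (Costantini's Lemma 11 for the primitive case, Dolfi--Jabara's Theorem 2 for the abelian Sylow $2$-subgroup case, Espuelas's Theorem 3.1, Moret\'o--Wolf's Corollary 2.6 for the $2/3$ bound, and Yang's Corollary 2.4 for the $3'$ case), one per hypothesis. The only discrepancies are attributional: the $\pi$-solvable $2/3$ bound in part (i) is Moret\'o--Wolf's orbit theorem rather than Yang's Theorem 3.5 (which concerns Fitting height, not this bound), and the solvable $3'$ bullet is due to Yang, not Espuelas, whose orbit theorem covers the odd-order setting.
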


\begin{proof}
See \cite[Lemma 11]{Cos}, \cite[Theorem 2]{Dol-Jab}, \cite[Theorem 3.1]{Esp}, \cite[Corollary 2.6]{Mor-Wol} and \cite[Corollary 2.4]{Yan-Gluck}.
\end{proof}

\begin{prop}
\label{prop:Monomial Gluck's conjecture}
Let $G$ be a group and set $V:=\irr(\mathbf{F}^*(G/\Phi(G)))$. Let $\pi:=\pi(\mathbf{F}^*(G/\Phi(G)))$, consider $\gamma$ as in Theorem \ref{thm:Large orbits} and define $\alpha:=1/(1-\gamma)$. Then 
$$|G:\mathbf{F}(G)|\leq b_m(G)^{\alpha}$$
where $b_m(G)$ is the largest irreducible monomial character degree of $G$.
\end{prop}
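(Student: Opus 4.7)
The strategy is to construct, by the orbit theorem stated as Theorem \ref{thm:Large orbits}, a linear character of an abelian subgroup of $G$ whose induction to $G$ gives a monomial character of degree at least $|G:\mathbf{F}(G)|^{1-\gamma}$; solving for $|G:\mathbf{F}(G)|$ will yield the claim.

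First, I would reduce to the case $\Phi(G)=1$. Since $\Phi(G/\Phi(G))=1$, since $\mathbf{F}(G/\Phi(G))=\mathbf{F}(G)/\Phi(G)$, and since every monomial character of $G/\Phi(G)$ inflates to a monomial character of $G$, we have $|G:\mathbf{F}(G)|=|\overline{G}:\mathbf{F}(\overline{G})|$ and $b_m(\overline{G})\leq b_m(G)$, where $\overline{G}:=G/\Phi(G)$. So it suffices to prove the bound with $G$ replaced by $\overline{G}$, and we may assume $\Phi(G)=1$. Under this hypothesis, $F:=\mathbf{F}^*(G)=\mathbf{F}(G)$ is abelian, $V=\irr(F)$ is a faithful completely reducible $(G/F)$-module (which is the setup required by Theorem \ref{thm:Large orbits}), and by Gaschütz (\cite[III.4.4]{Hup67}) $F$ admits a complement $H$ in $G$.

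Next, I would apply Theorem \ref{thm:Large orbits} to the faithful action of $G/F$ on $V$. This yields $\lambda\in V=\irr(F)$ with
\[ |(G/F)_\lambda|\leq |G/F|^{\gamma}, \qquad \text{equivalently} \qquad |G:G_\lambda|\geq |G:F|^{1-\gamma}. \]
Since $F\leq G_\lambda$ and $G=FH$ with $F\cap H=1$, one has $G_\lambda=F(H\cap G_\lambda)$, so $F$ still has a complement in $G_\lambda$. Hence $\lambda$ extends to a linear character $\widehat{\lambda}\in\irr(G_\lambda)$ (define it trivially across the complement; this is where \cite[19.12]{Hup98} is invoked in Proposition \ref{prop:Fitting 8} and the same argument applies here). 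By the Clifford correspondence, $\mu:=\widehat{\lambda}^G\in\irr(G)$ is irreducible and, being induced from a linear character, it is monomial. Therefore
\[ b_m(G)\geq \mu(1)=|G:G_\lambda|\geq |G:\mathbf{F}(G)|^{1-\gamma}, \]
and raising to the power $\alpha=1/(1-\gamma)$ gives the stated bound.

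There is no genuine obstacle in this argument: the large-orbit input of Theorem \ref{thm:Large orbits} is precisely what controls the orbit length, and the Gaschütz/complement machinery is the same one already used in Proposition \ref{prop:Fitting 8}. The only point requiring care is verifying that the complement of $F$ in $G$ descends to a complement of $F$ in $G_\lambda$, which is immediate from $F\leq G_\lambda$; once this is in place, the extension of $\lambda$ and the monomiality of the induced character follow at once.
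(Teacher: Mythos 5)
Your proposal is correct and follows essentially the same route as the paper: reduce to $\Phi(G)=1$, apply Theorem \ref{thm:Large orbits} to the faithful completely reducible module $\irr(\mathbf{F}(G))$ to find a small stabilizer $G_\lambda$, extend $\lambda$ to a linear character of $G_\lambda$ via the Gasch\"utz complement, and induce to get a monomial character of degree $|G:G_\lambda|\geq |G:\mathbf{F}(G)|^{1-\gamma}$. The only difference is that you spell out the extension step (the complement descending to $G_\lambda$) which the paper leaves implicit, citing the same references used in Proposition \ref{prop:Fitting 8}.
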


\begin{proof}
Since, by hypothesis, $\mathbf{F}^*(G/\Phi(G))=\mathbf{F}(G/\Phi(G))=\F(G)/\Phi(G)$, we may assume $\Phi(G)=1$. Now, $F:=\mathbf{F}(G)$ is abelian and $V=\irr(F)$ is a finite faithful completely reducible $(G/F)$-module. By Theorem \ref{thm:Large orbits} there exists $\lambda\in V$ such that $|G_\lambda:F|\leq |G:F|^\gamma$. Let $\wh{\lambda}\in\irr(G_\lambda)$ be an extension of $\lambda$ and consider $\chi:=\wh{\lambda}^G\in \irr_m(G)$. Then, $|G:F|=|G:F|^\alpha|G:F|^{-\alpha\gamma}\leq |G:F|^\alpha|G_\lambda:F|^{-\alpha}=|G:G_\lambda|^\alpha=\chi(1)^\alpha\leq b_m(G)^\alpha$.
\end{proof}

We remark that Conjecture \ref{conj:m-Gluck} holds also when $|V|=3^4$ and $G$ is conjugate to a subgroup of $\text{GL}(V)$ of order $1152$. In this case $\GL(V)$ has three conjugacy classes of subgroups of order $1152$ and the subgroups of one of these classes do not have orbits of size at least $|1152|^{1/2}$. However, using GAP \cite{Gap} one can check that the result holds for this subgroups as well.

As we have already mentioned, there exist solvable groups in which $b(G)$ is arbitrarily larger than $b_m(G)$.

\begin{ex}
\label{ex:Gluck}
Let $q$ and $p$ be odd primes such that $p\equiv -1\pmod{q}$ and consider $P$ an extraspecial $p$-group of order $p^3$ and exponent $p$.  Let $C\leq \text{Aut}(P)$ be a cyclic group of order $q$ acting trivially on $\z(P)$ and define $G:=P\rtimes C$. By elementary character theory, we deduce that the only irreducible character degrees of $G$ are $1$, $q$ and $p$ and that no monomial characters can have degree $p$. This shows that $b_m(G)=q<p=b(G)$. Notice that the prime $p$ can be chosen to be arbitrarily large by Dirichlet's theorem on arithmetic progressions. 
\end{ex}

\section{A characterization of normal Sylow $p$-subgroups}
\label{sec:Sylow}

In this final section, we consider the characterization given by Malle and Navarro in \cite{Mal-Nav} and obtain Theorem \ref{thm:m-Malle-Navarro}. We prove a slightly more general result which also implies \cite[Theorem 1.3]{Lu-PanI} and \cite[Theorem 1.1 (2)]{Lu-PanII}.

\begin{theo}
\label{thm:m-Malle-Navarro with kernels}
Let $G$ be a finite group and set
$$G_m^*(p):=\bigcap\limits_\chi \ker(\chi),$$
where $\chi$ runs over all characters in $\irr_m(G\mid 1_P)$ which vanish on some $p$-element of $G$. If $G^*_m(p)$ is solvable, then it has a normal Sylow $p$-subgroup.
\end{theo}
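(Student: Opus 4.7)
The plan is to reduce, via Theorem~\ref{thm:m-Malle-Navarro}, to a character-theoretic assertion inside the normal subgroup $N:=G_m^*(p)$. Since $N$ is an intersection of kernels of characters of $G$, it is normal in $G$; assume $N$ is solvable. Setting $Q:=P\cap N$, which is a Sylow $p$-subgroup of $N$, it suffices to prove $Q\unlhd N$, and by Theorem~\ref{thm:m-Malle-Navarro} applied to the solvable group $N$ this is equivalent to showing that $p\nmid\psi(1)$ for every $\psi\in\irr_m(N\mid 1_Q)$.

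Suppose for contradiction there is $\psi\in\irr_m(N\mid 1_Q)$ with $p\mid\psi(1)$, and write $\psi=\mu^N$ for a linear character $\mu$ of some subgroup $U\leq N$. Mackey's decomposition applied to the condition $[\psi|_Q,1_Q]>0$ allows us, after replacing $(U,\mu)$ by an $N$-conjugate, to assume that $\mu|_{U\cap Q}=1_{U\cap Q}$. Since $U\leq N$, we have $U\cap P=U\cap Q$, so in particular $\mu|_{U\cap P}=1_{U\cap P}$.

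The heart of the argument is to lift the pair $(U,\mu)$ to a monomial character $\chi\in\irr_m(G\mid 1_P)$ lying over $\psi$. A natural attempt is to build a subgroup $H\leq G$ containing both $U$ and $P$, together with a linear character $\lambda\in\irr(H)$ satisfying $\lambda|_U=\mu$ and $\lambda|_P=1_P$; the compatibility $\mu|_{U\cap P}=1_{U\cap P}$ arranged above is exactly what is needed for such a $\lambda$ to exist. Setting $\chi:=\lambda^G$ one then obtains, via Frobenius reciprocity, a monomial character of $G$ lying over $\mu$ (hence over $\psi$) and over $1_P$, with $p\mid\chi(1)$ since $\psi(1)\mid\chi(1)$. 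A Burnside-type vanishing argument in the spirit of the Malle--Navarro machinery of \cite{Mal-Nav} then produces a $p$-element on which $\chi$ vanishes. Such a $\chi$ is one of the characters whose kernel defines $G_m^*(p)$, so $N\leq\ker(\chi)$; but then $\chi|_N=\chi(1)\,1_N$ forces the constituent $\psi$ of $\chi|_N$ to be trivial, contradicting $p\mid\psi(1)$.

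The main obstacle is the construction of $H$ and $\lambda$: the product $UP$ need not be a subgroup of $G$, and even when it is, $\lambda^G$ need not be irreducible nor automatically lie over $1_P$. I expect this step to be handled by passing to a minimal counterexample and exploiting the solvability of $N$ to place $U$ inside a suitable normal subgroup along which the relevant Sylow $p$-subgroups can be coordinated; this should also facilitate a precise invocation of the vanishing-on-$p$-element lemma from the Malle--Navarro circle of ideas.
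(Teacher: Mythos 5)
Your proposal has two fatal problems. The first is circularity: within this paper, Theorem \ref{thm:m-Malle-Navarro} (and the three-way corollary at the end of Section \ref{sec:Sylow}) is \emph{deduced from} Theorem \ref{thm:m-Malle-Navarro with kernels}, so you cannot open by applying the hard direction of Theorem \ref{thm:m-Malle-Navarro} to the solvable group $N=G_m^*(p)$ unless you supply an independent proof of it; you do not, and an induction on $|G|$ does not rescue this, since $N$ may equal $G$. The second problem, which you yourself flag as ``the main obstacle'', is that the heart of the argument is missing and the route you sketch for it does not work: there is in general no subgroup $H$ containing both $U$ and $P$ carrying a linear character $\lambda$ with $\lambda|_U=\mu$ and $\lambda|_P=1_P$. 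The group $\langle U,P\rangle$ can be all of $G$, and two linear characters agreeing on the intersection of their domains need not glue to a linear character of any overgroup --- linearity is a constraint coming from commutators, not from intersections. Even granting such a $\chi=\lambda^G$, you would still owe its irreducibility, the condition $[\chi_P,1_P]\neq 0$, and, crucially, a $p$-element on which $\chi$ vanishes; the appeal to ``a Burnside-type vanishing argument in the spirit of Malle--Navarro'' is not a step, and no off-the-shelf theorem hands you a $p$-element in the zero set of a character merely because $p$ divides its degree.

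For contrast, the paper never descends to $\irr_m(N)$. In a minimal counterexample it fixes a minimal normal subgroup $M\le L:=G_m^*(p)$, shows $M$ is an abelian $p'$-group (since $\O_p(L)=1$), and chooses $\lambda\in\irr(M)$ whose stabilizer in a suitable $p$-chief factor $K/M$ of $G$ inside $MQ/M$ is trivial, so that $\vartheta:=\lambda^K$ vanishes on $K\setminus M$ and hence every $\chi\in\irr(G\mid\vartheta)$ vanishes on the nontrivial $p$-elements of $K\cap Q$. The monomial character over $1_P$ is then built by extending $\lambda$ to $T:=G_\lambda$ and twisting the extension by a character of $T/M$ so that it becomes trivial on the complement $T\cap\n_G(Q)$ of $M$ in $T$; Mackey then gives $[\chi_P,1_P]\neq 0$. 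The vanishing on $p$-elements and the $1_P$-condition are thus engineered simultaneously by one choice of $\lambda$ inside a normal $p'$-subgroup, and the contradiction $1=\vartheta(1)=|K:M|$ falls out because $L\le\ker(\chi)$ forces $M\le\ker(\vartheta)$. This is the construction your sketch lacks, and without it the proof does not close.
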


\begin{proof}
Let $G$ be a minimal counterexample. Set $L:=G^*_m(p)$ and notice that $L\unlhd G$. Let $Q$ be a Sylow $p$-subgroup of $L$. If $M$ is a minimal normal subgroup of $G$ with $M\leq L$, then $L/M=(G/M)^*_m(p)$ and we obtain $QM\unlhd L$, which implies $QM\unlhd G$. In particular $\O_p(L)=1$ and, as $L$ is solvable, we deduce that $M$ is an abelian $p'$-subgroup. Furthermore, by the Frattini argument, $G=M\rtimes N$ and $L=M\rtimes (N\cap L)$, where $N:=\n_G(Q)$. Consider $M<K\leq MQ$ such that $K/M$ is a chief factor of $G$ and set $H:=K\cap Q$. Observe that $H\neq 1$ and that $\mathbf{C}_H(M)\leq \mathbf{O}_p(L)=1$. Therefore $H$ acts faithfully on $\irr(M)$ and, by \cite[Lemma 3.1]{Isa-Nav-Wol}, there exists $\lambda\in \irr(M)$ such that $H_\lambda=1$. As a consequence $K_\lambda=M$ and $\vartheta:=\lambda^K$ is an irreducible character of $K$ vanishing on the normal subset $K\setminus M$. By Clifford's theorem, every character $\chi\in\irr(G\mid \vartheta)$ vanishes on $K\setminus M$ and, in particular, on $H\setminus 1$.

Recall that $G$ splits over $M$ and let $\wh{\lambda}$ be an extension of $\lambda$ to $T:=G_\lambda$. Consider $\alpha_0:=\wh{\lambda}_{T\cap N}$ and let $\alpha\in \irr(T/M)$ be the character corresponding to $\alpha_0$ via the isomorphism $T/M\simeq T\cap N$. Then, $\beta:=\wh{\lambda}\alpha^{-1}$ is a linear character of $T$ lying over $\lambda$ and $\chi:=\beta^G\in\irr_m(G)$ by the Clifford correspondence. Notice that $\chi$ lies over $1_P$ by the MacKay formula. On the other hand, observe that $\chi$ lies over $\beta$, hence over $\lambda$ and so over $\vartheta=\lambda^K$. By the previous paragraph, we deduce that $L=G^*_m(p)\leq \ker(\chi)$. This implies that $M\leq \ker(\vartheta)$ and so $\vartheta\in\irr(K/M)$. Since $K/M$ is abelian, we conclude that $1=\vartheta(1)=|K:M|$ a contradiction.
\end{proof}

As an immediate consequence, we obtain the following strong form of Theorem \ref{thm:m-Malle-Navarro}.

\begin{cor}
Let $G$ be a solvable groups, $p$ a prime and consider $P\in\syl_p(G)$. Then the following are equivalent:
\begin{enumerate}
\item $P\unlhd G$;
\item $p$ does not divide $\chi(1)$, for every $\chi\in\irr_m(G\mid 1_P)$;
\item $\chi(x)\neq 0$, for every $\chi\in\irr_m(G\mid 1_P)$ and $x\in P$.
\end{enumerate}
\end{cor}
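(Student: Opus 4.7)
The plan is to prove the equivalences via the cyclic chain (i) $\Rightarrow$ (ii) $\Rightarrow$ (iii) $\Rightarrow$ (i). The substantive content is already packaged in Theorem \ref{thm:m-Malle-Navarro with kernels}, which I would invoke only for the implication (iii) $\Rightarrow$ (i); the other two implications follow from standard character-theoretic observations.

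For (i) $\Rightarrow$ (ii), if $P\unlhd G$ and $\chi\in\irr_m(G\mid 1_P)$, then Clifford's theorem forces $\chi_P$ to be a multiple of the $G$-invariant character $1_P$, so $P\leq \ker(\chi)$. Thus $\chi$ is inflated from an irreducible character of $G/P$, and $\chi(1)$ divides $|G/P|$, which is coprime to $p$. For (ii) $\Rightarrow$ (iii), I would appeal to the classical congruence: for any $\chi\in\irr(G)$ and any $p$-element $x\in G$, one has $\chi(x)\equiv \chi(1)$ modulo any prime ideal of the ring of algebraic integers lying above $p$. Under hypothesis (ii) the right-hand side is a unit modulo such a prime, so $\chi(x)\ne 0$ for every $x\in P$ and every $\chi\in\irr_m(G\mid 1_P)$.

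The key step is (iii) $\Rightarrow$ (i). Here I would observe that, since every $p$-element of $G$ is conjugate to an element of $P$ and characters are class functions, condition (iii) is equivalent to the statement that no $\chi\in \irr_m(G\mid 1_P)$ vanishes on any $p$-element of $G$. Consequently the family of characters indexing the intersection that defines $G_m^*(p)$ in Theorem \ref{thm:m-Malle-Navarro with kernels} is empty, so $G_m^*(p)=G$. Since $G$ is solvable, the theorem then supplies a normal Sylow $p$-subgroup of $G$, which must equal $P$. The only conceptual point is recognizing that condition (iii) is precisely what is needed to force $G_m^*(p) = G$, at which moment Theorem \ref{thm:m-Malle-Navarro with kernels} applies directly and no further work is required.
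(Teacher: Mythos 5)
Your proposal is correct and follows essentially the same route as the paper: the cycle (i) $\Rightarrow$ (ii) $\Rightarrow$ (iii) via standard Clifford theory and the $p$-adic congruence $\chi(x)\equiv\chi(1)$, and then (iii) $\Rightarrow$ (i) by observing that (iii) forces $G_m^*(p)=G$ so that Theorem \ref{thm:m-Malle-Navarro with kernels} applies. The paper's proof is just a terser version of exactly this argument.
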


\begin{proof}
First notice that (i) implies (ii) and that (ii) implies (iii). Then we conclude by Theorem \ref{thm:m-Malle-Navarro with kernels}.
\end{proof}

The above result does not hold for arbitrary finite groups. For instance, consider $G=A_5$, $p=2$ and observe that the irreducible monomial characters of $G$ lying over the principal character of a Sylow $p$-subgroup are the trivial character and the character of degree $5$.

\bibliographystyle{alpha}
\bibliography{References}
\vspace{1cm}

DEPARTMENT OF MATHEMATICS, CITY, UNIVERSITY OF LONDON, EC$1$V $0$HB, UNITED KINGDOM.
\textit{Email address:} \href{mailto:damiano.rossi@city.ac.uk}{damiano.rossi@city.ac.uk}
\end{document}